\def\N{\mathbb{N}}
\def\C{\mathbb{C}}
\def\Z{\mathbb{Z}}
\def\F{\mathbb{F}}
\newtheorem{prop}{\bf Proposition}[section]
\newtheorem{thm}[prop]{\bf Theorem}
\newtheorem{rmk}[prop]{\it Remark}
\begin{document}

\title[Weak amenability of free products]{Weak amenability of free products of hyperbolic and amenable groups}

\author[I. Vergara]{Ignacio Vergara}

\address{Saint-Petersburg State University, Leonhard Euler International Mathematical Institute,
14th Line 29B, Vasilyevsky Island, St. Petersburg, 199178, Russia}

\email{ign.vergara.s@gmail.com}

\thanks{This work is supported by the Ministry of Science and Higher Education of the Russian Federation, agreement № 075–15–2019–1619}

\subjclass{Primary 46L07; Secondary 20E06, 43A07, 20F67, 22F10}
%

\keywords{Weak amenability, free products, orbit equivalence}

\begin{abstract}
We show that, if $G$ is an amenable group and $H$ is a hyperbolic group, then the free product $G\ast H$ is weakly amenable. A key ingredient in the proof is the fact that $G\ast H$ is orbit equivalent to $\Z\ast H$.
\end{abstract}

\maketitle

\section{Introduction}

Weak amenability was introduced by Cowling and Haagerup \cite{CowHaa} as a generalisation of amenability in the context of approximation properties of operator algebras. Before receiving that name, this property was first studied by de Canni\`ere and Haagerup \cite{deCHaa} for discrete subgroups of $\operatorname{SO}_0(n,1)$. Although weak amenability can be defined for general locally compact groups by means of approximate identities of the Fourier algebra, in this paper we will restrict ourselves to countable discrete groups. We refer the reader to \cite{CowHaa} for the general definition.

Let $G$ be a countable group. We say that $G$ is weakly amenable if there exists a sequence of finitely supported functions $\varphi_n:G\to\C$ converging pointwise to $1$, and a constant $C\geq 1$ such that
\begin{align*}
\sup_{n\in\N}\|\varphi_n\|_{B_2(G)}\leq C.
\end{align*}
Here $B_2(G)$ stands for the space of Herz--Schur multipliers on $G$. See \S\ref{Sec_wa} for details. The Cowling--Haagerup constant $\boldsymbol\Lambda(G)$ is the infimum of all $C\geq 1$ such that the condition above holds. If $G$ is not weakly amenable, we set $\boldsymbol\Lambda(G)=\infty$. 

Recall that a group $G$ is amenable if and only if there exists a sequence of finitely supported, positive definite functions $\varphi_n:G\to\C$ converging pointwise to $1$; see e.g. \cite[\S 2.6]{BroOza}. The fact that every positive definite function $\varphi:G\to\C$ satisfies $\|\varphi\|_{B_2(G)}=\varphi(e)$ implies that every amenable group $G$ is weakly amenable with $\boldsymbol\Lambda(G)=1$. The converse is not true since free groups satisfy $\boldsymbol\Lambda(\F_n)=1$ for all $n\geq 1$.

Weak amenability is known to be preserved by taking direct products and subgroups; however, it is not known if it is stable under free products. For a pair of groups $G,H$, the free product $G\ast H$ is the group of words in $G$ and $H$, where the group operation is defined by concatenation. See e.g. \cite[\S 2.3.2]{Loh} for the formal definition. The question of weak amenability of free products remains open in general, but some particular cases are well understood.

Bo\.{z}ejko and Picardello \cite{BozPic} showed that the free product of amenable groups is weakly amenable with Cowling--Haagerup constant 1. The proof of this result exploits in a very clever way the geometry of trees, and the techniques developed in that paper proved to be very fruitful in the study of weak amenability from a geometric point of view.

Generalising Bo\.{z}ejko and Picardello's result, Ricard and Xu \cite{RicXu} proved that if $G$ and $H$ are weakly amenable groups with $\boldsymbol\Lambda(G)=\boldsymbol\Lambda(H)=1$, then $G\ast H$ is weakly amenable with $\boldsymbol\Lambda(G\ast H)=1$. In this case, the approach is quite different, as they prove a more general theorem for reduced free products of $\operatorname{C}^*$-algebras, obtaining the result for groups as a corollary.

In \cite{GuReTe}, Guentner, Reckwerdt and Tessera studied weak amenability for relatively hyperbolic groups. A particular case of their result says that, if $H$ is hyperbolic and $G$ has polynomial growth, then $G\ast H$ is weakly amenable. This result builds on Ozawa's work \cite{Oza}, where he shows that all hyperbolic groups are weakly amenable.

The aim of this paper is to prove the following.

\begin{prop}\label{Prop}
Let $G$ be an amenable group and let $H$ be a hyperbolic group. Then the free product $G\ast H$ is weakly amenable.
\end{prop}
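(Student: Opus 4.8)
The plan is to deduce the weak amenability of $G\ast H$ from that of $\Z\ast H$ by means of an orbit equivalence, relying on two facts as black boxes: that weak amenability (indeed the exact value of the Cowling--Haagerup constant) is invariant under orbit equivalence, and the Ornstein--Weiss theorem, which says that any two infinite amenable groups admit orbit equivalent free ergodic probability-measure-preserving (p.m.p.) actions.

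First I would dispose of the case of finite $G$: a finite group has polynomial growth, so in that case $G\ast H$ is weakly amenable directly by the Guentner--Reckwerdt--Tessera theorem. Hence I may assume that $G$ is infinite. By Ornstein--Weiss there are free ergodic p.m.p. actions $G\curvearrowright(X,\mu)$ and $\Z\curvearrowright(X,\mu)$ on a common standard probability space whose orbit equivalence relations coincide, say $R$. Fix in addition any free p.m.p. action $H\curvearrowright(Y,\nu)$, with relation $S$.

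Next comes the key step announced in the abstract: producing an orbit equivalence $G\ast H\sim_{OE}\Z\ast H$. Here I would invoke the free product construction for measured equivalence relations (Gaboriau, Alvarez--Gaboriau): the free product relation $R\ast S$ is built functorially from $R$ and $S$ alone, and can be realised as the orbit relation of a free p.m.p. action of $G\ast H$ and \emph{simultaneously} of a free p.m.p. action of $\Z\ast H$, precisely because $G$ and $\Z$ contribute the same relation $R$ while $H$ contributes $S$ on both sides. Thus $G\ast H$ and $\Z\ast H$ generate one and the same orbit equivalence relation $R\ast S$, i.e.\ they are orbit equivalent. The point to verify carefully is that the assembled actions are genuinely free and that their orbit relations equal the free product $R\ast S$ rather than some quotient of it; this is the most delicate part of the argument.

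Finally, since $\Z$ has polynomial growth and $H$ is hyperbolic, $\Z\ast H$ is weakly amenable by Guentner--Reckwerdt--Tessera. Transporting weak amenability along the orbit equivalence $G\ast H\sim_{OE}\Z\ast H$ then yields the weak amenability of $G\ast H$, completing the proof. I expect the main obstacle to be the construction and the freeness/relation verification of the free-product orbit equivalence in the middle step; the transfer of weak amenability itself is furnished by its orbit equivalence invariance, and the weak amenability of the model group $\Z\ast H$ is already available from the literature.
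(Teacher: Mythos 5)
Your proposal is correct and takes essentially the same route as the paper: Ornstein--Weiss gives $G\sim_{OE}\Z$, stability of orbit equivalence under free products gives $G\ast H\sim_{OE}\Z\ast H$, and the Cowling--Zimmer invariance of $\boldsymbol\Lambda$ transfers weak amenability from $\Z\ast H$ to $G\ast H$. The only inessential difference is in the endgame: where you invoke Guentner--Reckwerdt--Tessera (polynomial growth free hyperbolic) both for finite $G$ and for $\Z\ast H$, the paper simply observes that in those cases the relevant free product is itself hyperbolic and applies Ozawa's theorem directly.
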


The proof relies on the notion of orbit equivalence of group actions, which we review in \S\ref{Sec_OE}.

\section{Weak amenability}\label{Sec_wa}

We will give now the precise definition of weak amenability. For this, we need to talk first about Schur multipliers. For more details, we refer the reader to \cite[Appendix D]{BroOza}.

Let $X$ be a set and let $\ell_2(X)$ denote the Hilbert space of complex-valued, square-summable functions on $X$. We denote by $\delta_x$ ($x\in X$) the elements of the canonical orthonormal basis of $\ell_2(X)$. Let $T:\ell_2(X)\to\ell_2(X)$ be a bounded linear operator. We define its matrix coefficients $(T_{x,y})$ by
\begin{align*}
T_{x,y}=\langle T\delta_y,\delta_x\rangle,\quad\forall x,y\in X.
\end{align*}
Observe that an operator $T$ is completely determined by its matrix coefficients $(T_{x,y})_{x,y\in X}$. We say that a function $\psi:X\times X\to\C$ is a Schur multiplier on $X$ if the map
\begin{align}\label{def_M_psi}
M_\psi : (T_{x,y}) \mapsto (\psi(x,y)T_{x,y})
\end{align}
is well defined in the algebra of bounded operators $\mathcal{B}(\ell_2(X))$. In this case, $M_\psi$ is automatically continuous.

Now let $G$ be a countable group. We say that $\varphi:G\to\C$ is a Herz--Schur multiplier on $G$ if the function $\psi:G\times G\to\C$ given by
\begin{align*}
\psi(t,s)=\varphi(s^{-1}t),\quad\forall s,t\in G,
\end{align*}
is a Schur multiplier on $G$. We denote the space of Herz--Schur multipliers by $B_2(G)$ and we endow it with the norm
\begin{align*}
\|\varphi\|_{B_2(G)}=\|M_\psi\|,
\end{align*}
where $M_\psi$ is the bounded linear map on $\mathcal{B}(\ell_2(G))$ defined as in \eqref{def_M_psi}.

We say that $G$ is weakly amenable if there exists a sequence of finitely supported Herz--Schur multipliers $\varphi_n:G\to\C$ converging pointwise to $1$, and a constant $C\geq 1$ such that
\begin{align*}
\sup_{n\in\N}\|\varphi_n\|_{B_2(G)}\leq C.
\end{align*}
We define the Cowling--Haagerup constant $\boldsymbol\Lambda(G)$ as the infimum of all $C\geq 1$ such that the condition above holds.

Recall that a finitely generated group is hyperbolic if its Cayley graph is hyperbolic. For more details, we refer the reader to \cite[\S 7]{Loh}. The following result is essential for our purposes.

\begin{thm}[Ozawa \cite{Oza}]\label{Thm_Oza}
Let $H$ be a hyperbolic group. Then $H$ is weakly amenable.
\end{thm}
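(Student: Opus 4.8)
The plan is to construct, following the strategy that works for trees, a sequence of finitely supported Herz--Schur multipliers $\varphi_n:H\to\C$ with $\varphi_n\to 1$ pointwise and $\sup_n\|\varphi_n\|_{B_2(H)}<\infty$. Fix a finite generating set, let $d$ be the associated word metric and $|\cdot|$ the word length, and let $H$ act on the left on its Cayley graph, a proper geodesic $\delta$-hyperbolic space with basepoint $e$. The point to exploit is that a function of the form $g\mapsto f(|g|)$ is finitely supported as soon as $f$ is supported on an interval $[0,n]$, because balls in $H$ are finite; its associated Herz--Schur kernel is $\psi(x,y)=f(d(x,y))$. So the whole problem reduces to the following \emph{uniform} estimate: produce profiles $f_n$ supported on $[0,n]$ with $f_n(k)\to 1$ for every fixed $k$, such that the Schur norm $\|M_{\psi_n}\|$ of $\psi_n(x,y)=f_n(d(x,y))$ stays bounded.

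To estimate $\|M_{\psi_n}\|$ I would use the standard factorisation criterion for Schur multipliers (see \cite[Appendix D]{BroOza}): one has $\|M_\psi\|\le C$ if and only if there are Hilbert space vectors $(\xi_x)_{x\in H}$ and $(\eta_y)_{y\in H}$ with $\psi(x,y)=\langle\xi_x,\eta_y\rangle$ and $\sup_x\|\xi_x\|\,\sup_y\|\eta_y\|\le C$. The task then becomes to realise the metric kernel $f_n(d(x,y))$, up to a controlled error, as such a Gram kernel built from the geometry of the Cayley graph. The geometric bridge is the identity $d(x,y)=|x|+|y|-2(x\,|\,y)_e$, where $(x\,|\,y)_e$ is the Gromov product, which ties the kernel to the length of the initial overlap of the geodesics $[e,x]$ and $[e,y]$; writing $f_n$ as a superposition over scales $k$ of contributions that see only whether $(x\,|\,y)_e\ge k$, one is led to assemble the vectors $\xi_x$ from the edges of $[e,x]$ read off scale by scale.

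In a tree this is exactly Haagerup's computation: the geodesics $[e,x]$ and $[e,y]$ literally coincide on an initial segment of length $(x\,|\,y)_e$, so letting $\xi_x$ be a suitably weighted characteristic function of the edges of $[e,x]$ makes the inner products $\langle\xi_x,\xi_y\rangle$ reproduce the target kernel on the nose, and a Fej\'er--type choice of profile (a trapezoidal approximation of the constant function $1$ supported on $[0,n]$) yields both a uniformly bounded multiplier norm and pointwise convergence $f_n(k)\to 1$. The genuine difficulty, which is the heart of Ozawa's theorem \cite{Oza}, is that in a true $\delta$-hyperbolic space geodesics are neither unique nor exactly overlapping: two geodesics issued from $e$ only $\delta$-fellow-travel out to distance about $(x\,|\,y)_e$ and then separate, so the naive Gram kernel matches the target only up to an additive error of order $\delta$ --- an error the factorisation criterion will not absorb by itself.

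The way I would resolve this is to replace honest geodesics by a canonical, $H$-equivariant family of quasi-geodesic $1$-chains, such as the one furnished by Mineyev's homological bicombing of a hyperbolic group. Its ``quasi-tree'' overlap estimates, combined with the four-point inequality $(x\,|\,y)_e\ge\min\{(x\,|\,z)_e,(z\,|\,y)_e\}-\delta$, give exactly the uniform control of the error terms that is missing, while the $H$-equivariance of the chains is what upgrades the resulting Schur bound on $H\times H$ to a genuine \emph{Herz--Schur} bound, i.e.\ a bound on $\|\varphi_n\|_{B_2(H)}$. With these vectors in hand, truncating the profile to $[0,n]$ makes each $\varphi_n$ finitely supported, and the last step is a routine verification of pointwise convergence $\varphi_n\to 1$ and of the uniform norm bound. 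The step I expect to be the main obstacle is exactly the construction and estimation of these equivariant vectors, so that their Gram kernel reproduces $f_n(d(x,y))$ within an error that survives the uniform factorisation bound.
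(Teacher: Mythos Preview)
The paper does not prove Theorem~\ref{Thm_Oza} at all: it is stated with attribution to Ozawa \cite{Oza} and used as a black box in the proof of Proposition~\ref{Prop}. There is therefore no ``paper's own proof'' to compare your proposal against.

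That said, what you have written is a reasonable high-level sketch of the strategy behind Ozawa's original argument: radial profiles to guarantee finite support, the Gilbert-type factorisation criterion to estimate Schur norms, the reduction to controlling overlaps of geodesics via the Gromov product, and the replacement of genuine geodesics by Mineyev's equivariant bicombing to tame the $\delta$-errors. You have also correctly identified the crux, namely that the naive Gram kernel built from geodesics in a $\delta$-hyperbolic space only approximates the target up to an additive error that the factorisation criterion does not directly absorb, and that the bicombing is precisely the device that repairs this. As a sketch this is accurate; as a proof it is of course incomplete, since the actual estimates --- in particular the exponential decay of the overlap errors coming from Mineyev's flow and the verification that the resulting kernels remain uniformly bounded in Schur norm after truncation --- are where all the work lies and are only gestured at here. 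But none of that is relevant to the present paper, which simply imports the result.
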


Observe that this theorem does not provide a bound on the Cowling--Haagerup constant of $H$. And, in fact, there is no such bound. More precisely, it was proved in \cite{CowHaa} that every cocompact lattice $\Gamma$ in $\operatorname{Sp}(n,1)$ satisfies
\begin{align*}
\boldsymbol\Lambda(\Gamma)=2n-1,
\end{align*}
and such lattices are hyperbolic for all $n$.

Cowling and Zimmer \cite{CowZim} exploited this fact in order to show that two lattices $\Gamma<\operatorname{Sp}(n,1)$ and $\Lambda<\operatorname{Sp}(m,1)$ cannot be orbit equivalent if $n\neq m$.

\section{Orbit equivalence}\label{Sec_OE}

A key concept in the study of measured group theory is the notion of orbit equivalence for group actions. We refer the reader to \cite{Fur} for a detailed treatment of this subject. In this article, we will only focus on its connection with weak amenability, as devised by Cowling and Zimmer \cite{CowZim}.

Let $G$ and $H$ be countable groups, and let $G\curvearrowright(X,\mu)$, $H\curvearrowright(Y,\nu)$ be measure preserving actions on standard non-atomic probability measure spaces. We say that these actions are orbit equivalent if there exists a measure space isomorphism $T:(X,\mu)\to (Y,\nu)$ sending $G$-orbits onto $H$-orbits. We call $T$ an orbit equivalence between $G\curvearrowright(X,\mu)$ and $H\curvearrowright(Y,\nu)$. We will write $G\sim_{OE}H$ if such actions and such an orbit equivalence exist. See \cite[\S 2.2]{Fur} for more details.

\begin{thm}[Cowling--Zimmer \cite{CowZim}]\label{Thm_CZ}
Let $G$ and $H$ be countable groups such that $G\sim_{OE}H$. Then $\boldsymbol\Lambda(G)=\boldsymbol\Lambda(H)$.
\end{thm}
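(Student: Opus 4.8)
The plan is to recast weak amenability as an invariant of a von Neumann algebra attached to the group action, and then to exploit the fact that an orbit equivalence produces an isomorphism of these algebras. For a von Neumann algebra $M$ with separable predual, write $\Lambda_{\mathrm{cb}}(M)$ for the infimum of those $C\geq 1$ admitting a net of normal, finite-rank, completely bounded maps $\Phi_i:M\to M$ with $\sup_i\|\Phi_i\|_{\mathrm{cb}}\leq C$ converging to $\mathrm{id}_M$ in the point-ultraweak topology (the weak-$*$ completely bounded approximation property). The basic fact, going back to Haagerup, is that $\Lambda_{\mathrm{cb}}(L(G))=\boldsymbol\Lambda(G)$: a finitely supported Herz--Schur multiplier $\varphi$ induces a normal finite-rank map $m_\varphi$ on $L(G)$ determined by $m_\varphi(u_g)=\varphi(g)u_g$, with $\|m_\varphi\|_{\mathrm{cb}}=\|\varphi\|_{B_2(G)}$, and this correspondence matches the two constants. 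Since $\Lambda_{\mathrm{cb}}$ is patently preserved under $*$-isomorphism, it suffices to attach to each group an algebra that computes $\boldsymbol\Lambda(G)$ and depends only on the orbit equivalence relation.

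The right object is the crossed product. Fixing measure-preserving actions $G\curvearrowright(X,\mu)$ and $H\curvearrowright(Y,\nu)$ witnessing $G\sim_{OE}H$ — which, as is standard in this setting, I may assume to be essentially free — the key lemma I would prove is that for any such action one has
\begin{align*}
\Lambda_{\mathrm{cb}}(L^\infty(X)\rtimes G)=\boldsymbol\Lambda(G).
\end{align*}
The inequality $\leq$ comes from transferring Herz--Schur multipliers: $m_\varphi$ extends to the Fourier-type multiplier $a\,u_g\mapsto\varphi(g)\,a\,u_g$ on the crossed product with the same completely bounded norm, and one upgrades it to a finite-rank map by precomposing with finite-rank approximations of $\mathrm{id}_{L^\infty(X)}$, available because $L^\infty(X)$ is hyperfinite and hence satisfies $\Lambda_{\mathrm{cb}}(L^\infty(X))=1$. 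The reverse inequality $\geq$ uses the trace-preserving normal conditional expectation $E:L^\infty(X)\rtimes G\to L(G)$ obtained by integrating out $X$: if the $\Phi_i$ witness the approximation property of the crossed product, then $E\circ\Phi_i\circ\iota$ witness it for $L(G)$ with no increase in completely bounded norm, since $E$ and the inclusion $\iota:L(G)\hookrightarrow L^\infty(X)\rtimes G$ are complete contractions.

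It then remains to see that orbit equivalence identifies the two crossed products. For essentially free actions the Feldman--Moore theory gives $L^\infty(X)\rtimes G\cong L(\mathcal{R}_{G\curvearrowright X})$, the von Neumann algebra of the orbit equivalence relation, and symmetrically for $H$. An orbit equivalence $T$ is precisely an isomorphism of measured equivalence relations $\mathcal{R}_{G\curvearrowright X}\cong\mathcal{R}_{H\curvearrowright Y}$, so it induces a $*$-isomorphism $L^\infty(X)\rtimes G\cong L^\infty(Y)\rtimes H$. Combining this with the lemma and the invariance of $\Lambda_{\mathrm{cb}}$ yields $\boldsymbol\Lambda(G)=\Lambda_{\mathrm{cb}}(L^\infty(X)\rtimes G)=\Lambda_{\mathrm{cb}}(L^\infty(Y)\rtimes H)=\boldsymbol\Lambda(H)$, as desired.

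I expect the main obstacle to be the key lemma, and within it the inequality $\leq$: producing genuinely finite-rank completely bounded maps on the crossed product of uniformly bounded norm requires simultaneously truncating the $G$-direction (via finitely supported multipliers) and the abelian direction (via finite-rank maps on $L^\infty(X)$), and checking that the combined maps still converge to the identity point-ultraweakly while keeping $\|\cdot\|_{\mathrm{cb}}$ controlled by $\boldsymbol\Lambda(G)$ alone. The identification $\Lambda_{\mathrm{cb}}(L(G))=\boldsymbol\Lambda(G)$ and the Feldman--Moore identification are standard inputs I would cite rather than reprove; the essential freeness of the actions is the one point where some care about the precise reading of $\sim_{OE}$ is needed.
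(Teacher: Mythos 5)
You should first be aware that the paper does not prove this theorem at all: it is quoted as a black box from \cite{CowZim}, so the only fair comparison is with Cowling and Zimmer's original argument, which is genuinely different from yours. They never pass through finite-rank maps on a von Neumann algebra. Instead they transfer the multipliers directly through the orbit equivalence: identifying $(X,\mu)$ with $(Y,\nu)$, a finitely supported $\varphi_n$ on $G$ with $\|\varphi_n\|_{B_2(G)}\leq C$ defines (by freeness) a kernel $k_n(gx,x)=\varphi_n(g)$ on the orbit equivalence relation; a Gilbert-type representation of $\varphi_n$ is pushed forward to one of $k_n$, and averaging over the probability space, $\psi_n(h)=\int_X k_n(hx,x)\,d\mu(x)$, yields functions on $H$ with $\|\psi_n\|_{B_2(H)}\leq C$ (the relevant vectors $x\mapsto P(hx)$ lie in $L^2(X;\mathcal{H})$ precisely because $\mu$ is a finite invariant measure) and $\psi_n\to 1$ pointwise. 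Finite support is then repaired by a summability argument: $\sum_h|\psi_n(h)|\leq\sum_g|\varphi_n(g)|<\infty$ and $\|\cdot\|_{B_2}\leq\|\cdot\|_{\ell_1}$, so truncating $\psi_n$ costs an arbitrarily small amount of $B_2$-norm. Your insistence on reading essential freeness into $\sim_{OE}$ is correct and necessary: with the paper's literal definition, trivial actions would make any two groups orbit equivalent and the theorem would be false.

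The genuine gap in your proposal is the inequality $\Lambda_{\mathrm{cb}}(L^\infty(X)\rtimes G)\leq\boldsymbol\Lambda(G)$, and it is not a technicality you can defer: it is the entire content of the theorem in your architecture, and the mechanism you suggest for it fails as stated. A normal finite-rank map $\psi$ on $L^\infty(X)$ is not $G$-equivariant, and ``applying $\psi$ to Fourier coefficients'', $a u_g\mapsto\varphi(g)\psi(a)u_g$, is not completely bounded with a constant independent of $\operatorname{supp}\varphi$. In the standard matrix picture of the crossed product inside $L^\infty(X)\,\bar\otimes\,B(\ell_2(G))$, the element $\sum_g a_g u_g$ has $(h,k)$ entry $\alpha_{h^{-1}}(a_{hk^{-1}})$, so your map acts on row $h$ by the conjugated map $\alpha_{h^{-1}}\circ\psi\circ\alpha_h$; applying \emph{different} unital completely positive maps row by row only satisfies a Cauchy--Schwarz bound of order $\sqrt{|F|}$ on a band of width $|F|$, and nothing better without extra structure. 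The construction is clean exactly when $\psi$ can be chosen $G$-equivariant, i.e.\ a conditional expectation onto a $G$-invariant finite-dimensional subalgebra; that is available for profinite actions (the Ozawa--Popa setting) but an Ornstein--Weiss-type orbit equivalence hands you no invariant subalgebras whatsoever. So you must either produce finite-rank maps in a genuinely different way --- to my knowledge this is not available as a citable black box for arbitrary p.m.p.\ actions of weakly amenable groups, which is precisely why Cowling--Zimmer argue as they do --- or abandon finite-rank maps on the crossed product entirely and run the averaging argument sketched above. The remaining legs of your proof (Haagerup's $\Lambda_{\mathrm{cb}}(L(G))=\boldsymbol\Lambda(G)$, the compression $E\circ\Phi_i\circ\iota$ through the trace-preserving conditional expectation, and the Feldman--Moore identification) are all correct.
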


\begin{rmk}
This result was later extended by Jolissaint \cite{Jol} to measure equivalent groups. Recently, Ishan \cite{Ish} extended this even further to the context of von Neumann equivalence of groups, which was introduced in \cite{IsPeRu}.
\end{rmk}

A very important result concerning orbit equivalence of amenable groups was proved by Ornstein and Weiss \cite{OrnWei}.

\begin{thm}[Ornstein--Weiss \cite{OrnWei}]\label{Thm_OW}
Any two ergodic probability measure preserving actions of any two infinite countable amenable groups are orbit equivalent.
\end{thm}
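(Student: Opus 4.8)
The statement to be proved is the Ornstein--Weiss theorem, and the plan is to reduce orbit equivalence of two such actions to a single structural property of their orbit equivalence relations, namely \emph{hyperfiniteness}. Given an ergodic p.m.p. action $G\curvearrowright(X,\mu)$ with $G$ infinite, amenable and countable, let $R_G\subseteq X\times X$ denote the orbit equivalence relation, whose classes are the $G$-orbits; ergodicity together with the infinitude of $G$ and non-atomicity of $\mu$ forces the $G$-orbits to be infinite almost everywhere, so that $R_G$ is ergodic of type $\mathrm{II}_1$. By the Feldman--Moore correspondence, an orbit equivalence between two actions is precisely an isomorphism of their orbit equivalence relations as measured equivalence relations, so it suffices to prove that all the $R_G$ arising in this way are mutually isomorphic. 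I would establish this by comparing each of them to a single model: Dye's theorem asserts that any two ergodic, measure-preserving, \emph{hyperfinite} equivalence relations of type $\mathrm{II}_1$ are isomorphic, the canonical example being the orbit relation of an ergodic $\Z$-action. The whole theorem thus reduces to the claim that $R_G$ is hyperfinite, i.e. $R_G=\bigcup_n R_n$ modulo null sets for some increasing sequence of finite-class measurable subequivalence relations $R_n$. (We may assume the action is free, to which the general ergodic case reduces by standard arguments; freeness is what lets the combinatorics of $G$ be read off along orbits.)

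The core of the argument is the construction of such an exhausting sequence $R_n$, and this is where amenability enters through Følner sets. Fixing a Følner sequence $(F_k)$ in $G$, the essential innovation of Ornstein and Weiss is the \emph{quasi-tiling} theorem: for every $\varepsilon>0$ and every finite $K\subseteq G$ one can select finitely many $(K,\varepsilon)$-invariant Følner sets $T_1,\dots,T_m$ (the tiles) so that a large region of $G$ can be almost covered by pairwise disjoint left translates of the $T_i$, with uncovered relative density below $\varepsilon$. Transporting this combinatorial fact into the dynamical system by a Rokhlin-type lemma for free amenable actions, I would realise these tilings along orbits by measurable tower structures in $X$: up to a set of measure at most $\varepsilon$, the space is partitioned into finite pieces, each a translate of a single tile $T_i$ over a measurable base. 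Declaring two points equivalent when they belong to the same such finite piece then defines a finite subequivalence relation $R\subseteq R_G$ that agrees with $R_G$ on a set of measure greater than $1-\varepsilon$.

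Finally, to upgrade a single good approximation into an increasing sequence exhausting $R_G$, I would iterate with parameters $\varepsilon_n\to 0$ and finite sets $K_n\nearrow G$, arranging at each stage that the tiling used at step $n+1$ refines the one at step $n$ outside a set of small measure. A Borel--Cantelli estimate then yields, after deleting a null set, the nesting $R_n\subseteq R_{n+1}$, while $K_n\nearrow G$ guarantees that every orbit is eventually captured, so that $\bigcup_n R_n=R_G$ almost everywhere. This \emph{refinement-with-nesting} step is the main obstacle: amenable groups admit no exact tilings, only the approximate quasi-tilings above, so the error regions must be controlled delicately enough that the combinatorial refinement of successive tilings can be made compatible with the measure-theoretic nesting of the $R_n$. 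Once $R_G$ is shown to be hyperfinite, ergodic and of type $\mathrm{II}_1$, Dye's theorem identifies it with the orbit relation of an ergodic $\Z$-action, and the mutual orbit equivalence of all the original actions follows.
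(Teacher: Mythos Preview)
The paper does not give its own proof of Theorem~\ref{Thm_OW}; it simply quotes the Ornstein--Weiss theorem from \cite{OrnWei} and uses it as a black box in the proof of Proposition~\ref{Prop}. So there is nothing in the paper to compare your argument against.

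That said, your outline is the standard route to this result and is essentially correct at the level of a sketch: reduce orbit equivalence to isomorphism of the orbit equivalence relations, invoke Dye's uniqueness theorem for ergodic type $\mathrm{II}_1$ hyperfinite relations, and then prove hyperfiniteness of $R_G$ via the Ornstein--Weiss quasi-tiling machinery (F{\o}lner tiles realised along orbits by a Rokhlin-type tower construction, iterated with shrinking error). You correctly identify the genuine difficulty as the \emph{nesting} step, where successive approximate tilings must be made compatible so that the finite subrelations $R_n$ increase; this is indeed where most of the work in the original paper (and in the Connes--Feldman--Weiss treatment) lies, and your sketch does not resolve it so much as flag it. The reduction to free actions is also not entirely innocent in the non-free ergodic case, though it can be handled. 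If you intend this as a self-contained proof rather than a roadmap, those two points would need to be filled in; as a summary of the strategy it is accurate.
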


In particular, every infinite countable amenable group $G$ satisfies $G\sim_{OE}\Z$.

\begin{proof}[Proof of Proposition \ref{Prop}]
If $G$ is finite, then $G\ast H$ is hyperbolic, and so it is weakly amenable by Theorem \ref{Thm_Oza}. Assume now that $G$ is infinite. Since $G$ is amenable, by Theorem \ref{Thm_OW}, we have $G\sim_{OE}\Z$. Since the relation $\sim_{OE}$ is preserved by free products (see \cite[\S 3]{Fur}), this implies that
\begin{align*}
G\ast H\sim_{OE}\Z\ast H.
\end{align*}
Hence, by Theorem \ref{Thm_CZ},
\begin{align*}
\boldsymbol\Lambda(G\ast H)=\boldsymbol\Lambda(\Z\ast H).
\end{align*}
On the other hand, since $H$ and $\Z$ are both hyperbolic, so is $\Z\ast H$. Therefore, by Theorem \ref{Thm_Oza},
\begin{align*}
\boldsymbol\Lambda(\Z\ast H) < \infty.
\end{align*}
This shows that
\begin{align*}
\boldsymbol\Lambda(G\ast H) < \infty,
\end{align*}
which finishes the proof.
\end{proof}

\bibliographystyle{plain} 

\bibliography{Bibliography}

\end{document}